\documentclass[b5paper] {article}
[10pt]



 

\usepackage{algorithm}    

\usepackage{graphicx}
\graphicspath{ {images/} }    
    
\makeatletter
\renewcommand*\l@section{\@dottedtocline{1}{1.5em}{2.3em}}
\makeatother

\usepackage{amsfonts}
\usepackage{amssymb}
\usepackage[T1]{fontenc}

\usepackage{tikz}
\usetikzlibrary{calc}

\usepackage{CJK}
\usepackage{amsmath}
 
\usepackage{amsfonts}
\usepackage{amssymb}
\usepackage{amsthm}
\usepackage{amssymb}
\usepackage{enumerate}
\usepackage[calc]{picture}
\usepackage[all,cmtip]{xy}

\usepackage[mathscr]{eucal}
\usepackage{eqlist}

\usepackage{color}
\usepackage{abstract} 
\usepackage[T1]{fontenc}
 
\setlength{\abovecaptionskip}{0pt}
\setlength{\belowcaptionskip}{0pt}

\theoremstyle{plain}
\newtheorem{theorem}{Theorem}

\newtheorem{example}[theorem]{Example}
\newtheorem{corollary}[theorem]{Corollary}

\theoremstyle{definition}

\usepackage{etoolbox}
\newtheoremstyle{myrem}
 {3pt}
 {3pt}
 {\normalsize}
 { }
 {\itshape}
 {:}
 { }
 {}

 \theoremstyle{myrem}
 \newtheorem{remark}{Remark}
 \appto\remark{\leftskip\parindent}
 \appto\remark{\rightskip\parindent}

\numberwithin{equation}{section}
\numberwithin{theorem}{section}

\begin{document}

\begin{center}
{\Large\bf Isometric embeddings of graphs into Riemannian manifolds}

\bigskip

\footnotetext[1]
{{\bf{AMS Mathematical Classifications 2010 (2000)}}. 	05C12,  		53C22	 }

\footnotetext[2]
{{\bf{Keywords}}.     graph, discrete metric space, isometric embedding, Riemannian manifold   }

Shiquan Ren

\medskip
 
 $\text{\ }$

\smallskip

\parbox{24cc}{{\small

{\textbf{Abstract}.}  
An isometric embedding of a graph  into a metric space is an embedding of the vertices   such that the smallest number of edges connecting any two vertices equals to the  distance in the metric space between the images. In this paper, we study isometric embeddings of graphs into Riemannian manifolds. We give some classifications of  graphs that can be isometrically embedded into certain Riemannian manifolds.  
}}

\end{center}

\section{Introduction}

In data science, graph is an important model   to characterize  relations among points.  For example,  networks, point-clouds in metric spaces, etc. can be investigated by graphical models.  In some  problems of data science,  the points of  data are taken from a Riemannian manifold, for example, neural network  \cite{ml,ml-1}, image and visualization \cite{com3,com1}, etc.  The underlying relations among the points in the data could be represented by a graph. 

\smallskip

A {graph} $G=(V(G),E(G))$ is a pair  of sets such that  the elements of $E(G)$ are $2$-element subsets of $V(G)$ (cf. \cite[p. 2]{gr}). Elements of $V(G)$ are called {vertices}. And elements of $E(G)$ are called {edges}. A graph $G$ is called {finite} if both $V(G)$ and $E(G)$ are finite sets. The {degree} of a vertex $v$, denoted as $\deg(v)$,  is the number of edges that contain $v$ (cf. \cite[p. 5]{gr}).  A graph $G$ is called {\bf complete} if each pair of distinct vertices is an edge (cf. \cite[p. 3]{gr}).  Given two graphs $G_1$ and $G_2$, we say that $G_1$ is a {subgraph} of $G_2$ if $V(G_1)\subseteq V(G_2)$ and $E(G_1)\subseteq E(G_2)$ (cf. \cite[pp. 3-4]{gr}).

Let $G$ be a graph.    For any two distinct vertices $x,y\in V(G)$, a {path} $\gamma$ from $x$ to $y$ is a sequence of edges
$\sigma_1\sigma_2\ldots\sigma_k$,    $\sigma_i=\{v_{i-1},v_{i}\}$ for  $i=1,2,\ldots,k$,  such that
$x=v_0$, $y=v_{k}$, 
and 
 $v_0,v_1,\ldots,v_{k}$ are distinct (cf. \cite[p. 6]{gr}). 
 We call $k$ the {length} of $\gamma$, and denote $k$ as $|\gamma|$. 
For any $x,y\in V(G)$, the {\bf distance}  
$d_G(x,y)$ is defined  to  be the length of the  shortest path  in $G$ from $x$ to $y$  (cf. \cite[p. 8]{gr}). 
 If there does not exist any path from $x$ to $y$, then we define $d_G(x,y)=\infty$. We define $d_G(x,x)=0$ for all $x\in V(G)$. It follows from a straight-forward verification that $(V(G),d_G)$ is a metric space.  An {\bf isometric embedding} of $G$ into a metric space $(X,d_X)$ is an injective map $f: V(G)\longrightarrow X$ such that for any $x,y\in V(G)$, 
\begin{eqnarray}\label{eq888}
d_G(x,y)=d_X(f(x),f(y)). 
\end{eqnarray}

The isometric embeddings of graphs into metric spaces have been studied in \cite{iso3,iso4,iso5,iso1,iso2}.  In 1970's, the isometric embeddings of graphs into Cartesian products of complete graphs were studied by Graham and Pollak \cite{iso4,iso5} to find addressing schemes for communications networks. In 1984, the topic was further studied by Winkler \cite{iso2}.  In 1985, isometric embeddings of graphs into Cartesian products of certain discrete metric spaces were studied by Graham and Winkler \cite{iso1}.  And in 1988, an investigation of isometric embeddings of graphs into metric spaces was given by Graham \cite{iso3}.


 \smallskip

Let $M$ be a complete Riemannian manifold (cf. \cite{do}). Let $d_M$ be the distance function on $M$ such that for any points $p,q\in M$, $d_M(p,q)$ is the length of the shortest geodesic from $p$ to $q$. If there does not exist any geodesic on $M$ from $p$ to $q$, then we define $d_M(p,q)=\infty$. We define $d_M(p,p)=0$ for all $p\in M$.

  For any distinct points $p,q\in M$, if there exist at least two distinct shortest geodesics $\gamma$ and $\gamma'$ of length $l$, both of which are from $p$ to $q$, then we say that $p$ and $q$ are {\bf dual points} of distance $l$.

\smallskip

In this paper,  we study isometric embeddings of graphs into Riemannian manifolds. In Theorem~\ref{pr5.1},  we prove that if a connected finite graph has a vertex of degree greater than or equal to $3$,   and the graph can be isometrically embedded into a complete Riemannian manifold without dual points of distance $2$,  then the graph must be  complete.  As a consequence, we give a classification of connected finite graphs that can be isometrically embedded into complete Riemannian manifolds without dual points of distance $2$, in Corollary~\ref{co5.2}.  In Theorem~\ref{pr5}, we give a classification of graphs isometrically embedded into spheres, with a vertex of degree greater than or equal to $3$. We also prove that the isometric embedding is unique up to an isometric homeomorphism of the sphere.  As a consequence, we give a classification of graphs isometrically embedded into spheres, in Corollary~\ref{co88}.  

The remaining part of this paper is organized as follows. In Section~\ref{s2},  we prove Theorem~\ref{pr5.1} and Theorem~\ref{pr5}. 
In Section~\ref{s4}, we propose some questions.


\section{Classifications of isometric embeddings of graphs into Riemannian manifolds}\label{s2}

In this section, we give some classification results of graphs that can be isometrically embedded into Riemannian manifolds. We prove Theorem~\ref{pr5.1}, derive Corollary~\ref{co5.2}, and give Example~\ref{ex-3} in Subsection~\ref{s2.1}.  We prove Theorem~\ref{pr5} and derive Corollary~\ref{co88} in Subsection~\ref{s2.2}.  

\subsection{Isometric embeddings of graphs into Riemannian manifolds without dual points of distance $2$}\label{s2.1}

In the following theorem, we investigate the graphs that can be isometrically embedded into a Riemannian manifold without dual points of distance $2$.

\begin{theorem}\label{pr5.1}
Let $G$ be a connected finite graph such that there exists a vertex $v$ with $\deg (v)\geq 3$.  Let $M$ be a  complete Riemannian manifold without dual points of distance $2$.  If there exists an isometric embedding of $G$ into $M$, then $G$ is a complete graph. 
\end{theorem}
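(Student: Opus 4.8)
The plan is to reduce the global statement to a purely local claim about $v$, namely that its closed neighborhood $N[v]=\{v\}\cup N(v)$ spans a complete subgraph, and then to propagate this along $G$ using connectedness. Throughout I would fix an isometric embedding $f\colon V(G)\to M$ and use that $M$, being complete, is geodesically complete, so that by Hopf--Rinow any two image points are joined by a minimizing geodesic. The basic translation is this: two distinct neighbors $a,b$ of $v$ are non-adjacent in $G$ if and only if $d_G(a,b)=2$, in which case $d_M(f(a),f(b))=2$ while $d_M(f(v),f(a))=d_M(f(v),f(b))=1$; concatenating minimizing geodesics $f(a)\to f(v)$ and $f(v)\to f(b)$ gives a length-$2$ path, hence a minimizing, hence smooth, geodesic, so that $f(v)$ is the midpoint of a shortest geodesic from $f(a)$ to $f(b)$.

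First I would record the geometric input coming from the hypothesis on $M$. If $d_M(p,q)=2$, then by the absence of dual points of distance $2$ there is a unique shortest geodesic $\sigma$ from $p$ to $q$; moreover its midpoint $m$ has the property that the minimizing geodesic from $m$ to $q$ is \emph{unique}. Indeed, any minimizing geodesic $\tau$ from $m$ to $q$ concatenates with the first half of $\sigma$ to a length-$2$, hence minimizing, hence smooth geodesic from $p$ to $q$; smoothness at $m$ forces $\tau$ to be the second half of $\sigma$. This ``midpoint uniqueness'' is what lets me speak unambiguously of the initial direction $u_a$ at $f(v)$ of the minimizing geodesic to $f(a)$, whenever $a$ is a neighbor of $v$ having a non-neighbor among the neighbors of $v$. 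I expect this to be the crux: the hypothesis forbids duality only at distance $2$, so a priori a distance-$1$ minimizing geodesic could fail to be unique, and the argument above is precisely what rescues uniqueness for the directions I need.

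Next I would prove the local claim: any two neighbors $a,b$ of $v$ are adjacent. Suppose not. Using $\deg(v)\ge 3$, pick a third neighbor $c$ and split into cases according to the adjacencies of $c$ with $a$ and $b$. If $c$ is adjacent to both, then $f(c)$ lies at distance $1$ from both $f(a)$ and $f(b)$, hence is the midpoint of the unique shortest geodesic from $f(a)$ to $f(b)$, forcing $f(c)=f(v)$ and contradicting injectivity. If $c$ is adjacent to exactly one of them, say to $a$ but not $b$, then both $f(a)$ and $f(c)$ are endpoints of distance-$2$ geodesics through the common midpoint $f(v)$ ending at $f(b)$; midpoint uniqueness makes the minimizing geodesic $f(v)\to f(b)$ unique, so these two geodesics share their second half, hence coincide, giving $f(a)=f(c)$, again impossible. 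If $c$ is adjacent to neither, then $u_a,u_b,u_c$ are all well defined and the three smooth midpoint geodesics give $u_a=-u_b$, $u_a=-u_c$ and $u_b=-u_c$, which together force a unit vector to equal its own negative, a contradiction. Hence $N[v]$ spans a complete subgraph, necessarily of size at least $4$.

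Finally I would globalize. Since $N[v]$ is a clique of size $\ge 4$, every vertex $a\in N[v]$ satisfies $\deg(a)\ge 3$, so the local claim applies to $a$ as well and $N[a]$ spans a complete subgraph. As $a$ is adjacent to all of $N[v]\setminus\{a\}$, we get $N[v]\subseteq N[a]$; and since $N[a]$ is a clique containing $v$, every element of $N[a]$ is adjacent to $v$, whence $N[a]\subseteq N[v]$. Thus $N[a]=N[v]$ for every $a\in N[v]$, so no edge leaves $N[v]$, and by connectedness $V(G)=N[v]$. Therefore $G$ is complete.
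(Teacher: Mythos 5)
Your proof is correct, and it reaches the conclusion by a genuinely different organization than the paper's. The paper fixes three neighbors $a,b,c$ of $v$, observes that among the three unit tangent vectors at $f(v)$ at most one pair can be antipodal, so at least two of the three concatenations have a corner at $f(v)$ and the corresponding pairs are forced to be edges; the no-dual-points hypothesis is then invoked exactly once, for the remaining pair $\{b,c\}$, by exhibiting two distinct length-$2$ minimizing paths $f(b)\to f(v)\to f(c)$ and $f(b)\to f(a)\to f(c)$. You instead argue by contradiction from a single non-adjacent pair of neighbors, first distilling the hypothesis into a ``midpoint uniqueness'' lemma (the minimizing geodesic from the midpoint of a distance-$2$ segment to either endpoint is unique), and then disposing of the three possible adjacency patterns of a third neighbor via uniqueness of the shortest geodesic, geodesic rigidity (two geodesics agreeing on a subarc coincide), and the impossibility of three pairwise-antipodal unit vectors. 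The two arguments rest on the same geometric facts, but yours makes explicit a point the paper leaves implicit --- that the tangent directions $u_a$ are well defined, which is needed to speak of ``cusps'' at $f(v)$ --- and your globalization via $N[a]=N[v]$ for every $a\in N[v]$ is tighter than the paper's appeal to ``an induction on the set of vertices.'' The paper's route is shorter and uses the dual-point hypothesis in its most direct form; yours isolates a reusable lemma and closes the combinatorial propagation step more carefully.
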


\begin{proof}
Let $f: V(G)\longrightarrow M$ be an isometric embedding of $G$ into $M$.  Since $\deg (v)\geq 3$, we let $a$, $b$ and $c$ be distinct vertices of $G$ such that $\{v,a\}$, $\{v,b\}$ and $\{v,c\}$ are edges of $G$. Since $f$ is isometric, there exist shortest geodesics $\gamma_{f(v),f(a)}$ from $f(v)$ to $f(a)$,  $\gamma_{f(v),f(b)}$ from $f(v)$ to $f(b)$,  and $\gamma_{f(v),f(c)}$ from $f(v)$ to $f(c)$ in $M$ such that the lengths of $\gamma_{f(v),f(a)}$, $\gamma_{f(v),f(b)}$, and $\gamma_{f(v),f(c)}$ are all $1$.  Consequently, there exists a point  among $f(a)$, $f(b)$ and $f(c)$, say $f(a)$, such that $f(v)$ is a cusp of the product of $\gamma_{f(v),f(a)}$ and $\gamma_{f(v),f(b)}$, and also a cusp of the product of $\gamma_{f(v),f(a)}$ and $\gamma_{f(v),f(c)}$.  Consequently, 
\begin{eqnarray}\label{eq-1}
d_M(f(a),f(b))<|\gamma_{f(v),f(a)}|+|\gamma_{f(v),f(b)}|=2,\\
d_M(f(a),f(c))<|\gamma_{f(v),f(a)}|+|\gamma_{f(v),f(c)}|=2. \label{eq-2}
\end{eqnarray}
Since $d_G(a,b)$ and $d_G(a,c)$ are integers and $a, b, c$ are distinct, with the help of (\ref{eq888}), it follows from (\ref{eq-1}) and (\ref{eq-2}) respectively that
\begin{eqnarray}\label{eq2}
&d_M(f(a),f(b))=d_G(a,b)=1,\\
&d_M(f(a),f(c))=d_G(a,c)=1. \label{eq2.1}
\end{eqnarray}
Therefore, there exist shortest geodesics $\gamma_{f(a),f(b)}$ from $f(a)$ to $f(b)$, and $\gamma_{f(a),f(c)}$ from $f(a)$ to $f(c)$, such that the lengths of both $\gamma_{f(a),f(b)}$ and $\gamma_{f(a),f(c)}$ are $1$. 

In the next, we show that each pair of distinct vertices among $v,a,b,c$ is an edge of $G$. By (\ref{eq2}) and (\ref{eq2.1}) respectively, both $\{a,b\}$ and $\{a,c\}$ are edges of $G$. Hence we only need to show that $\{b,c\}$ is  an edge of $G$.  We divide the proof in two cases. 

{\sc Case~1}. $f(v)$ is a cusp of the product of $\gamma_{f(v),f(b)}$ and $\gamma_{f(v),f(c)}$. 

 Then 
\begin{eqnarray*}
d_M(f(b),f(c))< |\gamma_{f(v),f(b)}|+|\gamma_{f(v),f(c)}|=2. 
\end{eqnarray*}
Thus with the help of (\ref{eq888}), 
\begin{eqnarray}\label{eq3}
d_M(f(b),f(c))=d_G(b,c)=1. 
\end{eqnarray}
It follows from (\ref{eq3}) that $\{b,c\}$ is  an edge of $G$.

{\sc Case~2}. $f(v)$ is not a cusp of the product of $\gamma_{f(v),f(b)}$ and $\gamma_{f(v),f(c)}$. 

Then there exists a geodesic $\gamma_{f(b),f(v),f(c)}$  of length $2$, starting at $f(b)$, ending at $f(c)$, and passing through $f(v)$, such that  $f(v)$ is the middle point  of  $\gamma_{f(b),f(v),f(c)}$ in length.  Hence
\begin{eqnarray}\label{eq9}
d_M(f(b),f(c))\leq |\gamma_{f(b),f(v),f(c)}|=2. 
\end{eqnarray}
Suppose to the contrary, $\{b,c\}$ is not   an edge of $G$. Then 
\begin{eqnarray}\label{eq10}
d_M(f(b),f(c))=d_G(b,c)\geq 2.
\end{eqnarray} 
Hence by (\ref{eq9}) and (\ref{eq10}), we have
\begin{eqnarray}\label{eq11}
d_M(f(b),f(c))= 2.  
\end{eqnarray}
Thus $\gamma_{f(b),f(v),f(c)}$ is a  shortest geodesic on $M$.

On the other hand, since both $\gamma_{f(b),f(a)}$ and $\gamma_{f(a),f(c)}$ are geodesics of length $1$, we see that the product of $\gamma_{f(b),f(a)}$ and $\gamma_{f(a),f(c)}$ gives a path on $M$ from $f(b)$ to $f(c)$, of length $2$. Consequently, it follows with the help of (\ref{eq11}) that the product of $\gamma_{f(b),f(a)}$ and $\gamma_{f(a),f(c)}$ is   a shortest geodesic of length $2$.       This contradicts the assumption that $M$ has no dual point. Therefore, $\{b,c\}$ is  an edge of $G$.

Summarizing both {\sc Case~1} and {\sc Case~2}, we see that $\{b,c\}$ is  an edge of $G$. Hence each pair of distinct vertices among $v,a,b,c$ is an edge of $G$.

Let $d$ be a vertex such that at least one of $\{d,a\}$, $\{d,b\}$, $\{d,c\}$, and $\{d,v\}$ is an edge of $G$. Without loss of generality, we suppose $\{d,a\}$ is an edge of $G$.  By applying the above arguments to $\{d,a,b,c\}$, $\{d,a,b,v\}$, and $\{d,a,c,v\}$ subsequently, we have that each pair of distinct vertices among $\{d,a,b,c\}$, $\{d,a,b,v\}$, and $\{d,a,c,v\}$ is an edge of $G$. That is, each pair of distinct vertices among $\{d,a,b,c,v\}$ is an edge of $G$.

Since $G$ is connected and finite, by an induction on the set of vertices of $G$,  we have that any two distinct vertices of $G$ is an edge of $G$. Hence $G$ is the complete graph. 
\end{proof}

The next corollary gives a complete classification of connected finite graphs that can be isometrically embedded into a Riemannian manifold $M$ without dual points of distance $2$.  It is a consequence of Theorem~\ref{pr5.1}. 

\begin{corollary}\label{co5.2}
Suppose there is an isometric embedding of a connected finite graph $G$ into a complete Riemannian manifold $M$ without dual points of distance $2$. Then one of the following holds:
\begin{enumerate}[(a).]
\item
$G$ is $\{\{v_1,v_2\},\{v_2,v_3\},\ldots,\{v_{n-1},v_n\}\}$ for some $n\geq 1$;
\item
$G$ is $\{\{v_1,v_2\},\{v_2,v_3\},\ldots,\{v_{n-1},v_n\},\{v_1,v_n\}\}$ for some $n\geq 3$;
\item
$G$ is the complete graph. 
\end{enumerate}
\end{corollary}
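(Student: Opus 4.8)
The plan is to reduce the corollary to Theorem~\ref{pr5.1} by a simple dichotomy on the maximum vertex degree of $G$. Since $G$ is connected and finite, I would let $\Delta$ denote the maximum degree occurring among the vertices of $G$, and I would split the argument according to whether $\Delta \geq 3$ or $\Delta \leq 2$.

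First, suppose $\Delta \geq 3$, so that $G$ has a vertex $v$ with $\deg(v) \geq 3$. Then all hypotheses of Theorem~\ref{pr5.1} are met: $G$ is a connected finite graph possessing a vertex of degree at least $3$, and by assumption there exists an isometric embedding of $G$ into the complete Riemannian manifold $M$ without dual points of distance $2$. Theorem~\ref{pr5.1} then yields immediately that $G$ is the complete graph, which is exactly conclusion (c).

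Next, suppose $\Delta \leq 2$, so every vertex of $G$ has degree at most $2$. Here I would invoke the elementary structure theorem for connected graphs of bounded degree: a connected finite graph in which every vertex has degree at most $2$ is either a path or a cycle. Concretely, if $G$ consists of a single vertex then it is the path with $n=1$ and empty edge set, covered by case (a); if $G$ has a vertex of degree at most $1$, then following the unique maximal walk starting from that vertex exhibits $G$ as a path $\{\{v_1,v_2\},\ldots,\{v_{n-1},v_n\}\}$, giving case (a); and if every vertex has degree exactly $2$, then the same walk eventually closes up on itself and exhibits $G$ as a cycle $\{\{v_1,v_2\},\ldots,\{v_{n-1},v_n\},\{v_1,v_n\}\}$ with $n\geq 3$ (the lower bound $n\geq 3$ coming from the fact that edges are $2$-element subsets, so no shorter cycle exists), giving case (b).

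The substantive work has already been performed in Theorem~\ref{pr5.1}; the corollary is essentially a repackaging of that theorem together with the observation that the only connected finite graphs falling outside its hypothesis are precisely those of maximum degree at most $2$, namely the paths and cycles. The sole point requiring mild care is the walk argument in the bounded-degree case—tracking the degree-$1$ endpoints (if any) and confirming that the absence of vertices of degree $\geq 3$ prevents any branching—but this is a routine induction on the number of vertices and presents no genuine obstacle.
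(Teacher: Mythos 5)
Your proposal is correct and follows essentially the same route as the paper: a dichotomy on whether $G$ has a vertex of degree at least $3$ (in which case Theorem~\ref{pr5.1} gives completeness) or maximum degree at most $2$ (in which case the standard classification of connected finite graphs as paths or cycles gives cases (a) and (b)). You simply spell out the path/cycle classification in more detail than the paper, which states it without proof.
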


\begin{proof}
We divide the proof in two cases.

{\sc Case~1}. Every vertex $v$ of $G$ satisfies $\deg (v)\leq 2$.

\noindent Then since $G$ is connected and finite, $G$ is either (a) or (b).

{\sc Case~2}. There exists a vertex $v$ of $G$ with $\deg (v)\geq 3$.

\noindent Then by Theorem~\ref{pr5.1}, $G$ is (c). 
\end{proof}

\begin{remark}
By the Hadamard Theorem (cf. \cite[Theorem 4.5]{hd} and \cite[Chap. 7]{do}),  if $M$ is a simply-connected complete Riemannian manifold with non-positive sectional curvature,  then any two points in $M$ are connected by a unique shortest geodesic. Hence $M$ has no dual points. And the classifications in Theorem~\ref{pr5.1} and Corollary~\ref{co5.2} hold. 
\end{remark}

Without the assumption that $M$ has no dual points of distance $2$, the classifications of $G$ in Theorem~\ref{pr5.1} and Corollary~\ref{co5.2} may not hold.  We give such an example in the following. 




\begin{example}\label{ex-3}
Let $n\geq 2$. Let $S^n$ be the $n$-sphere in $\mathbb{R}^{n+1}$. Suppose $S^n$ has  center $0$ and radius $\frac{2}{\pi}$. We consider a subset of $S^n$
\begin{eqnarray*}
V=\{(\pm\frac{2}{\pi},0,\ldots,0),(0,\pm\frac{2}{\pi},0,\ldots,0),\ldots,(0,\ldots,0,\pm\frac{2}{\pi})\}. 
\end{eqnarray*}
 Let $G_n$ be the graph with its set of vertices $V$. For any $a,b\in V$, we let $\{a,b\}$ be an edge of $G$ if and only if $\langle a,b\rangle=0$, or equivalently, $a,b$ are not antipodal. Here $\langle \text{\ } , \text{\ }\rangle$ is the canonical inner product of $\mathbb{R}^{n+1}$. 
Then we have an isometric embedding of $G_n$ into $S^n$.

In particular, when $n=2$, $G_2$ is the $1$-skeleton of the regular octahedron with vertices $(0,0,\pm\frac{2}{\pi})$, $(0,\pm\frac{2}{\pi},0)$, and $(\pm\frac{2}{\pi},0,0)$. 
\end{example}

\subsection{Isometric embeddings of graphs into spheres}\label{s2.2}

The next theorem gives the classification of the graphs that can be isometrically embedded into spheres, as well as the classification of the isometric embeddings. 

\begin{theorem}\label{pr5}
Let $G$ be a graph such that there exists a vertex $v$ with $\deg(v)\geq 3$. Suppose $G$ can be isometrically embedded into the $n$-sphere $S^n$ with radius $r$,  for some $n\geq 2$. Then 
\begin{enumerate}[(a).]
\item
either $G$ is a complete graph with the number of vertices smaller than or equal to $n+2$, and $r=\frac{1}{\text{arccos}(-\frac{1}{3})}$,   or  $G$ is a subgraph of  $G_n$ where $G_n$ is the graph given in Example~\ref{ex-3}, and $r=\frac{2}{\pi}$;
\item
for any two isometric embeddings $f$ and $g$ of $G$ into $S^n$, there exists an isometric homeomorphism $\varphi$ from $S^n$ to itself such that $f=\varphi\circ g$. 
\end{enumerate} 
\end{theorem}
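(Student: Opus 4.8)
The plan is to run the same dichotomy that drives Theorem~\ref{pr5.1}, specialized to the geometry of the round sphere, and then to read off both the radius and the vertex count from the Gram matrix of the image. First I would record two preliminary reductions. Writing $S^n_r$ for the sphere of radius $r$, note that $S^n_r$ is compact and that an isometric embedding sends distinct vertices to points whose mutual distances are positive integers, hence bounded below by $1$; a compact metric space cannot contain infinitely many points that are pairwise $1$-separated, so $G$ is finite, and since all spherical distances are finite, $G$ is connected. Next, for $p,q\in S^n_r$ one has $\langle p,q\rangle = r^2\cos\big(d_{S^n}(p,q)/r\big)$, so an isometric embedding $f$ is completely encoded by the Gram matrix $\big(r^2\cos(d_G(x,y)/r)\big)_{x,y}$; this viewpoint will drive both parts.

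For part (a) I would split on whether $S^n_r$ has dual points of distance $2$. Antipodal points are the only dual points on a sphere, and they lie at distance $\pi r$, so dual points of distance $2$ exist precisely when $r=2/\pi$. If $r\neq 2/\pi$, then $S^n_r$ meets the hypotheses of Theorem~\ref{pr5.1}, so $G$ is complete; writing $\phi=1/r$ and $c=\cos\phi$, every edge has length $1$, so all images sit at mutual angle $\phi$ and the Gram matrix on $m$ vertices equals $r^2\big((1-c)I+cJ\big)$, with $I$ the identity and $J$ the all-ones matrix. I would then extract the radius and the vertex bound from the spectrum of this matrix: positive semidefiniteness gives $c\geq -1/(m-1)$, while $\operatorname{rank}\leq n+1$ forces it into its rank-deficient, extremal state; together with the degree-$\geq 3$ hypothesis (so that $m\geq 4$ and a $K_4$ sits at $v$) this should isolate $c=-\tfrac13$, hence $r=1/\arccos(-\tfrac13)$, along with the bound on the number of vertices. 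If instead $r=2/\pi$, then the spherical diameter is $\pi r=2$, so every graph distance lies in $\{1,2\}$: adjacent images are orthogonal (angle $\pi/2$) and non-adjacent images are antipodal (angle $\pi$). Grouping the images by the lines they span, distinct lines must be mutually orthogonal, so after an orthogonal change of coordinates the images lie among $\{\pm r e_1,\dots,\pm r e_{n+1}\}$ with adjacency exactly ``orthogonal'', exhibiting $G$ as a subgraph of $G_n$.

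For part (b) the Gram-matrix viewpoint does almost all of the work: if $f$ and $g$ are isometric embeddings of $G$ into the same $S^n_r$, then $\langle f(x),f(y)\rangle = r^2\cos(d_G(x,y)/r) = \langle g(x),g(y)\rangle$ for all $x,y$, so the two configurations have identical Gram matrices. By the classical fact that two point sets in $\mathbb{R}^{n+1}$ with equal Gram matrices differ by an orthogonal transformation, there is $O\in O(n+1)$ with $f=O\circ g$, and then $\varphi=O|_{S^n}$ is an isometric homeomorphism satisfying $f=\varphi\circ g$. I expect the main obstacle to be the radius determination in the complete case: converting the coexistence of three unit geodesics at $f(v)$ into the exact value $c=-\tfrac13$ requires the spherical law of cosines (which yields $\cos\alpha = c/(1+c)$ for the angle $\alpha$ between the neighbor-directions in $T_{f(v)}S^n$) together with tight control of the admissible rank, and it is precisely here---excluding the larger radii at which a $K_4$ could a priori be placed in a higher-dimensional sphere---that the argument is most delicate.
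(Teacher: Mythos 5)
Your overall architecture is legitimate and in places genuinely different from the paper's: you split on whether $r=2/\pi$ (equivalently, whether the sphere has dual points of distance $2$) and then argue via Gram matrices, whereas the paper splits on the mutual distances of the images of three neighbours of $v$ and, for uniqueness, extends embeddings to the $1$-skeleton of a regular $(n+1)$-simplex or to $G_n$. Your $r=2/\pi$ branch (all graph distances lie in $\{1,2\}$, images group into mutually orthogonal antipodal pairs, hence $G$ is a subgraph of $G_n$) is correct and cleaner than the paper's induction. Your part (b) is also correct and is in fact more rigorous than the paper's extension argument: the two configurations have the identical Gram matrix $\bigl(r^2\cos(d_G(x,y)/r)\bigr)$, so an orthogonal map of $\mathbb{R}^{n+1}$ carries one onto the other, and its restriction to $S^n$ is the required $\varphi$.

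The genuine gap is exactly where you suspected it: the claim that $\operatorname{rank}\leq n+1$ "forces the Gram matrix into its rank-deficient, extremal state". It does not. The matrix $r^2((1-c)I+cJ)$ on $m$ vertices has eigenvalues $r^2(1-c)$ (multiplicity $m-1$) and $r^2(1+(m-1)c)$ (multiplicity $1$); it is positive definite of rank $m$ for every $c\in(-\tfrac{1}{m-1},1)$, so whenever $m\leq n+1$ nothing pins down $c$. Concretely, $K_4$ embeds isometrically into the $3$-sphere of radius $r$ for every $r\geq 1/\arccos(-\tfrac13)$, since $c=\cos(1/r)$ then ranges over $[-\tfrac13,1)$ and the four unit-distance points simply fail to lie in a totally geodesic $2$-sphere. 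Even in the tight case $m=n+2$, the vanishing eigenvalue gives $c=-\tfrac{1}{n+1}$ and hence $r=1/\arccos(-\tfrac{1}{n+1})$, which matches the stated $1/\arccos(-\tfrac13)$ only for $n=2$. So the step "this should isolate $c=-\tfrac13$" cannot be completed; note that this is the same soft spot as in the paper's own proof, which asserts without justification that $f(v),f(a),f(b),f(c)$ are the vertices of a regular tetrahedron in a totally geodesic $2$-sphere of $S^n$. Your Gram-matrix computation actually exposes that the radius assertion in part (a) requires an additional hypothesis (for instance $n=2$, or that $G$ has exactly $n+2$ vertices) to hold.
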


\begin{proof}

(a).  Let $f$ be an isometric embedding of $G$ into $S^n$. Then $G$ is connected and finite. We divide the proof of (a) into two steps.  

{\bf Step~1}. 
 Suppose $a$, $b$ and $c$ are distinct vertices of $G$ such that $\{a,v\}$, $\{b,v\}$ and $\{c,v\}$ are edges of $G$.  Then we have  the geodesics $\gamma_{f(v),f(a)}$, $\gamma_{f(v),f(b)}$ and $\gamma_{f(v),f(c)}$ of lengths $1$. By applying an analogous argument in Theorem~\ref{pr5.1},  the distances of at least two pairs among $f(a)$, $f(b)$ and $f(c)$ are strictly less than $2$.  It follows that at least two of $d_{S^n}(f(a),f(b))$, $d_{S^n}(f(a),f(c))$ and $d_{S^n}(f(b),f(c))$ are $1$. 
Without loss of generality, we suppose $d_{S^n}(f(a),f(c))$ and $d_{S^n}(f(b),f(c))$ are $1$. 

{\sc Case~1}. $d_{S^n}(f(a),f(b))=1$. 

 Then each pair of the vertices $v,a,b,c$ is an edge of $G$.  Moreover, $f(v)$, $f(a)$, $f(b)$, $f(c)$ are the vertices of a regular tetrahedron in a totally geodesic $2$-sphere of $S^n$.  Consequently,  the radius of $S^n$ is 
\begin{eqnarray}\label{eq-8}
r=\frac{1}{\text{arccos}(-\frac{1}{3})}. 
\end{eqnarray}

{\sc Case~2}. $d_{S^n}(f(a),f(b))=2$. 

 Then $\gamma_{f(a),f(v),f(b)}$ and $\gamma_{f(a),f(c),f(b)}$ are both geodesics from $f(a)$ to $f(b)$ of length $2$.  Hence $f(a)$ and $f(b)$ are antipodal. 
 Consequently, the radius of $S^n$ is
\begin{eqnarray}\label{eq-9}
r=\frac{2}{\pi}. 
\end{eqnarray}
Moreover, The mid-point of $\gamma_{f(a),f(v),f(b)}$ is $f(v)$ and the mid-point of $\gamma_{f(a),f(c),f(b)}$ is $f(c)$.

{\bf Step~2}. (i). Suppose {\sc Case~1} holds for $v,a,b,c$.

Then we have (\ref{eq-8}). Let $x$ be a vertex of $G$ such that $\{x,a\}$ is an edge of $G$. 
Then by applying the argument of {\sc Case~1} and {\sc Case~2} in Step~1 to $x,a,b,c$, we have that $\{x,a\}$, $\{x,b\}$ and $\{x,c\}$ are all edges of $G$.  And by applying the argument of {\sc Case~1} and {\sc Case~2} in Step~1 to $x,a,b,v$, we have that $\{x,v\}$ is also an edge of $G$.  Hence each pair of $x,a,b,c,v$ is an edge of $G$.  By an induction on the number of vertices of $G$, we have that $G$ is a complete graph.  We notice that $G$ is the $1$-skeleton of a simplex, and the dimension of this simplex is smaller than or equal to $n+1$.  This implies that the number of vertices of $G$ is smaller than or equal to $n+2$.

(ii). Suppose {\sc Case~2} holds for $v,a,b,c$.

Then we have (\ref{eq-9}). Let $x$ be a vertex of $G$ such that $\{x,a\}$ is an edge of $G$. Then by applying the argument of {\sc Case~1} and {\sc Case~2} to $x,a,c,v$, it follows that {\sc Case~2} holds for $x,a,c,v$. Hence $\gamma_{f(a),f(x)}$ is in the same big circle of either $\gamma_{f(c),f(a)}$ or $\gamma_{f(v),f(a)}$. Equivalently, $f(x)$ is  the antipodal point of either $f(c)$ or  $f(v)$. By an induction on the number of vertices of $G$, the image of the isometric embedding of $V(G)$ is a subset of $V$ given in Example~\ref{ex-3}.  Hence $G$ is a subgraph of $G_n$.

Summarizing both (i) and (ii) in Step~2, we obtain (a). 

\smallskip

(b). Let $f$ and $g$ be two isometric embeddings of $G$ into $S^n$. We divide the proof into two cases. 

{\sc Case~1}. $G$ is a complete graph. 

Let $\Delta^{n+1}$ be the regular $(n+1)$-simplex in $\mathbb{R}^{n+1}$ with its vertices in $S^n$.  Let $V(\Delta^{n+1})$ be the set of vertices of $\Delta^{n+1}$. 
Let $\text{sk}^1(\Delta^{n+1})$ be  the $1$-skeleton of $\Delta^{n+1}$.  We regard $\text{sk}^1(\Delta^{n+1})$ as a graph. Then $G$ is a complete subgraph of $\text{sk}^1(\Delta^{n+1})$. That is, $G$ is the $1$-skeleton of a face of $\Delta^{n+1}$.  Any isometric embedding $h$ of $G$ into $S^n$ can be extended  to be an isometric embedding $\tilde h$ of $\text{sk}^1(\Delta^{n+1})$ into $S^n$.  Let $\tilde f$ and $\tilde g$ be the extended isometric embeddings of $f$ and $g$ respectively.  Then there exists an 
isometric homeomorphism $\varphi$ from $S^n$ to itself such that 
$\tilde f=\varphi\circ \tilde g$. 
 By restricting $\tilde f$ and $\tilde g$ to $G$ respectively, we have $f=\varphi\circ g$. 

{\sc Case~2}. $G$ is a subgraph of $G_n$. 

Then any isometric embedding $h$ of $G$ into $S^n$ can be extended  to be an isometric embedding $\tilde h$ of $G_n$ into $S^n$.   Moreover,  for any two isometric embeddings $\tilde f$ and $\tilde g$ of $G_n$ into $S^n$, there  exists an isometric homeomorphism $\varphi$ from $S^n$ to itself such that $\tilde f=\varphi\circ \tilde g$.  
By an analogous argument with {\sc  Case~1}, we have that there exists an isometric homeomorphism $\varphi$ from $S^n$ to itself such that  $f=\varphi\circ g$.

Summarizing both {\sc Case~1} and {\sc Case~2}, we obtain (b). 
\end{proof}

The next corollary gives a complete classification of  all the graphs that can be isometrically embedded into spheres. It is a direct consequence of  Theorem~\ref{pr5}~(a). The proof is analogous to the proof of Corollary~\ref{co5.2}. 

\begin{corollary}\label{co88}
Suppose there is an isometric embedding of a   graph $G$ into   $S^n$, for some $n\geq 2$. Then one of the following holds:
\begin{enumerate}[(a).]
\item
$G$ is $\{\{v_1,v_2\},\{v_2,v_3\},\ldots,\{v_{n-1},v_n\}\}$ for some $n\geq 1$;
\item
$G$ is $\{\{v_1,v_2\},\{v_2,v_3\},\ldots,\{v_{n-1},v_n\},\{v_1,v_n\}\}$ for some $n\geq 3$;
\item
$G$ is a complete graph with the number of vertices smaller than or equal to $n+2$;
\item
$G$ is a subgraph of $G_n$. 
\end{enumerate}
\end{corollary}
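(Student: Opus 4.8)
The plan is to derive Corollary~\ref{co88} from Theorem~\ref{pr5}~(a) by the same degree dichotomy used in the proof of Corollary~\ref{co5.2}, the only extra work being to check that the bare existence of an isometric embedding already forces $G$ to be connected and finite (hypotheses that Corollary~\ref{co5.2} assumed outright). So I would begin by fixing an isometric embedding $f\colon V(G)\to S^n$ and letting $r$ be the radius, so that every distance realized in $S^n$ is bounded above by $\pi r$.

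The first substantive step is to record the two structural reductions. For connectedness, note that if $x,y$ lie in different components of $G$ then $d_G(x,y)=\infty$, whereas $d_{S^n}(f(x),f(y))\le \pi r<\infty$; since $f$ is isometric this is impossible, so $G$ is connected. For finiteness, observe that distinct vertices satisfy $d_G(x,y)\ge 1$, so their images are at arc-distance at least $1$ on $S^n$; a $1$-separated subset of the compact sphere $S^n$ must be finite (the open arc-balls of radius $\tfrac12$ about such points are disjoint and of uniformly positive volume), whence $V(G)$ is finite. This is the one place that needs genuine care, precisely because Corollary~\ref{co88} drops the finiteness and connectedness assumptions present in Corollary~\ref{co5.2}; everything afterwards is routine.

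With $G$ connected and finite in hand, I would split on the maximal degree exactly as in Corollary~\ref{co5.2}. If every vertex satisfies $\deg(v)\le 2$, then a connected finite graph all of whose degrees are at most $2$ is either a path (including the degenerate single-vertex case) or a cycle, giving alternative (a) or (b). Otherwise there is a vertex $v$ with $\deg(v)\ge 3$, and then Theorem~\ref{pr5}~(a) applies verbatim: $G$ is either a complete graph on at most $n+2$ vertices, which is alternative (c), or a subgraph of the graph $G_n$ of Example~\ref{ex-3}, which is alternative (d). Since the two degree cases are exhaustive, one of (a)--(d) must hold. The main obstacle is therefore not conceptual but one of hypotheses-checking: confirming connectedness and finiteness so that the reduction to Theorem~\ref{pr5} is legitimate, after which the classification follows immediately.
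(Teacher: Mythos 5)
Your proposal is correct and follows essentially the same route as the paper, which simply notes that Corollary~\ref{co88} is a direct consequence of Theorem~\ref{pr5}~(a) via the same degree dichotomy as in Corollary~\ref{co5.2}. Your explicit verification that an isometric embedding into $S^n$ forces $G$ to be connected and finite is a welcome bit of added rigor (the paper asserts this without proof inside the proof of Theorem~\ref{pr5}), but it does not change the overall argument.
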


\section{Further discussions}\label{s4}

As further discussions, we propose the following questions.
\begin{enumerate}[$\textit{Question}$ 1.]
\item
 Given an arbitrary Riemannian manifold $M$, how to classify all the graphs $G$ that can be isometrically embedded into $M$? 
\item
  Given an arbitrary graph $G$, does there exist a Riemannian manifold $M$ such that $G$ can be isometrically embedded into $M$?  If such $M$ exists, how to construct $M$?
\item
 Given an arbitrary Riemannian manifold $M$ and a graph $G$ that can be isometrically embedded into $M$, how to classify all the isometric embeddings of $G$   into $M$? 
\end{enumerate}

\bigskip

\noindent {\bf Acknowledgement}. The author would like to express his deep gratitude to  Prof. Jie Wu, Prof. Stephane Bressan, and Prof. Tee-How Loo  for their  guidance and encouragement. 

\bigskip

Address: School of Computing, National University of Singapore.

E-mail: sren@u.nus.edu












\end{document}